\newtheorem{theorem}{Theorem}
\newtheorem{corollary}[theorem]{Corollary}
\newtheorem{conjecture}[theorem]{Conjecture} 
\newcommand{\cE}{{\mathcal E}}
\newcommand{\cH}{{\mathcal H}}
\newcommand{\PP}{{\mathbb P}}
\renewcommand{\epsilon}{\varepsilon}
\newcommand\arxiv[1]{\texttt{\def~{{\tiny$\sim$}}{arXiv:#1}}} 
\long\def\symbolfootnote[#1]#2{\begingroup
\def\thefootnote{\fnsymbol{footnote}}\footnote[#1]{#2}\endgroup}
\begin{document}

\begin{center}

{\LARGE Dense subgraphs in the $H$-free process}

\vspace{3mm}

{\large Lutz Warnke\symbolfootnote[2]{The author was supported by a Scatcherd European Scholarship and an EPSRC Research Studentship.}}
\vspace{1mm}

{ Mathematical Institute,  University of Oxford\\
 24--29 St.~Giles', Oxford OX1 3LB, UK\\ 
 {\small\tt warnke@maths.ox.ac.uk}}

\vspace{6mm}

\small

\begin{minipage}{0.8\linewidth}
\textsc{Abstract.} 
The $H$-free process starts with the empty graph on $n$ vertices and adds edges chosen uniformly at random, one at a time, subject to the condition that no copy of $H$ is created, where $H$ is some fixed graph. 
When $H$ is strictly $2$-balanced, we show that for some $c,d> 0$, with high probability as $n \to \infty$, the final graph of the $H$-free process contains no subgraphs $F$ on $v_F \leq n^{d}$ vertices with maximum density $\max_{J \subseteq F}\{e_J/v_J\} \geq c$. 
This extends and generalizes results of Gerke and Makai for the $C_3$-free process. 
\end{minipage}
\end{center}  

\normalsize

\section{Introduction}%
Almost fifty years ago, Erd{\H{o}}s and R{\'e}nyi~\cite{ErdosRenyi1959} introduced the \emph{random graph process} $G(n,i)$. 
This starts with the empty graph on $n$ vertices and adds $i$ edges one by one, where each edge is chosen uniformly at random among all edges not yet present. 
In their seminal 1960 paper~\cite{ErdosRenyi1960}, the first problem they studied is the so-called \emph{small subgraphs problem}. 
Given a fixed graph $F$ with $e_F$ edges and $v_F$ vertices, it asks whether $G(n,i)$ whp\footnote{As usual, we say that an event holds \emph{with high probability}, or \emph{whp}, if it holds with probability $1-o(1)$ as $n\to\infty$.} contains a copy of $F$ as a subgraph or not. 
It took twenty years until Bollob{\'a}s~\cite{bollobas81} solved this problem in full generality, showing that the so-called \emph{maximum density} $m(F)$ is the crucial parameter that essentially determines the appearance of $F$. 
\begin{theorem}%
\label{thm:small:subgraphs}%
{\normalfont\cite{bollobas81}} 
Let $F$ be a fixed non-empty graph. Then 
\[
\lim_{n \to \infty} \PP[F \subseteq G(n,i)] = \begin{cases}
0 & \text{ if $i = o\big(n^{2-1/m(F)}\big)$}\\
1 & \text{ if $i = \omega\big(n^{2-1/m(F)}\big)$} 
\end{cases} \enspace ,
\]
where $m(F) := \max\{ e_J/v_J \mid J \subseteq F \text{ and } v_J \geq 1 \}$. 
\end{theorem}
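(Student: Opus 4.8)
The plan is to prove the statement first for the binomial random graph $G(n,p)$ with $p$ comparable to $i/n^2$, and then to transfer it to the process $G(n,i)$ using the standard asymptotic equivalence of the $G(n,m)$ and $G(n,p)$ models for monotone increasing properties. Fix a densest subgraph $J^{*}\subseteq F$, so that $e_{J^{*}}/v_{J^{*}}=m(F)$. The two points that make the argument run are: for the $0$-statement it is enough to forbid a single copy of the dense piece $J^{*}$; and for the $1$-statement, $p=\omega\big(n^{-1/m(F)}\big)$ is precisely the condition under which every subgraph expectation $n^{v_J}p^{e_J}$ (over $J\subseteq F$) tends to infinity simultaneously, which is exactly what the second moment method requires.

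\emph{The $0$-statement in $G(n,p)$.} Suppose $p=o\big(n^{-1/m(F)}\big)$ and let $X$ count the copies of $J^{*}$ in $G(n,p)$. There are $\Theta(n^{v_{J^{*}}})$ potential copies, each present with probability $p^{e_{J^{*}}}$, so, using $e_{J^{*}}=m(F)\,v_{J^{*}}$ and $p^{m(F)}=o(1/n)$,
\[
\mathbb{E}[X]=\Theta\big(n^{v_{J^{*}}}p^{e_{J^{*}}}\big)=\Theta\big((n\,p^{m(F)})^{v_{J^{*}}}\big)=o(1).
\]
Markov's inequality gives $X=0$ whp, and since every copy of $F$ contains a copy of $J^{*}$, also $F\not\subseteq G(n,p)$ whp.

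\emph{The $1$-statement in $G(n,p)$.} Suppose $p=\omega\big(n^{-1/m(F)}\big)$ and let $X$ count the copies of $F$. Put $\Phi:=\min\{n^{v_J}p^{e_J}:J\subseteq F,\ e_J\ge 1\}$. From the identity $n^{v_J}p^{e_J}=n^{\,v_J-e_J/m(F)}\big(n^{1/m(F)}p\big)^{e_J}$, together with $v_J-e_J/m(F)\ge 0$ (equality exactly when $J$ is densest) and $n^{1/m(F)}p=\omega(1)$, one reads off $\Phi\to\infty$; in particular $\mathbb{E}[X]=\Theta(n^{v_F}p^{e_F})\to\infty$. For the variance, expand $\mathbb{E}[X^2]$ by classifying ordered pairs $(F_1,F_2)$ of copies of $F$ according to the isomorphism type of the overlap $H=(V(F_1)\cap V(F_2),\,E(F_1)\cap E(F_2))$: there are $O(1)$ types, each type is realised by $O(n^{2v_F-v_H})$ pairs, and each such pair lies in $G(n,p)$ with probability $p^{2e_F-e_H}$, whence
\[
\frac{\mathbb{E}[X^2]}{\mathbb{E}[X]^2}=O\Big(\sum_{H}\frac{1}{n^{v_H}p^{e_H}}\Big).
\]
The empty overlap gives the main term (contributing at most $\mathbb{E}[X]^2$ to $\mathbb{E}[X^2]$); an $H$ with $v_H\ge 1$ and $e_H=0$ contributes $O(1/n)=o(1)$; and an $H$ with $e_H\ge 1$ satisfies $n^{v_H}p^{e_H}\ge n^{v_{H_0}}p^{e_{H_0}}\ge\Phi$, where $H_0\subseteq F$ is $H$ with its isolated vertices deleted, so contributes $O(1/\Phi)=o(1)$. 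Hence $\mathrm{Var}(X)=o(\mathbb{E}[X]^2)$, and Chebyshev's inequality yields $X=(1+o(1))\mathbb{E}[X]$, in particular $X>0$, whp.

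\emph{Transfer to $G(n,i)$, and the main obstacle.} For the $1$-statement, given $i=\omega\big(n^{2-1/m(F)}\big)$ put $p:=i/n^2=\omega\big(n^{-1/m(F)}\big)$; since $G(n,p)$ has at most $i$ edges whp it can be coupled inside $G(n,i)$, so $F\subseteq G(n,p)$ whp forces $F\subseteq G(n,i)$ whp. For the $0$-statement, given $i=o\big(n^{2-1/m(F)}\big)$ put $p:=2i/\binom{n}{2}=o\big(n^{-1/m(F)}\big)$; then $G(n,p)$ has at least $i$ edges whp, so $G(n,i)$ can be coupled inside $G(n,p)$ and $F\not\subseteq G(n,p)$ whp forces $F\not\subseteq G(n,i)$ whp. (One may equally invoke the standard $G(n,m)$--$G(n,p)$ transference lemma.) The only genuinely delicate step is the second moment estimate --- checking that the overlap sum really does collapse to $1+o(1)$ --- whose conceptual heart is recognising $1/m(F)$ as exactly the exponent at which all the counts $n^{v_J}p^{e_J}$ become unbounded; everything else is first moment bookkeeping and the routine model transfer.
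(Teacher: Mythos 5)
The paper does not prove this statement: Theorem~\ref{thm:small:subgraphs} is quoted as background from \cite{bollobas81}, so there is no internal proof to compare against. Your argument is the standard (and essentially the original) one --- first moment on a densest subgraph $J^{*}$ for the $0$-statement, second moment with the overlap decomposition for the $1$-statement, then transfer between $G(n,p)$ and $G(n,i)$ by monotonicity --- and it is correct; in particular the key identity $n^{v_J}p^{e_J}=n^{v_J-e_J/m(F)}\bigl(n^{1/m(F)}p\bigr)^{e_J}$ does show $\Phi\to\infty$, and the overlap sum collapses as you claim. One small technicality in the $0$-statement transfer: with $p=2i/\binom{n}{2}$ the event that $G(n,p)$ has at least $i$ edges holds whp only when $i\to\infty$; for bounded $i$ the coupling as stated fails. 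This is harmless --- $\PP[F\subseteq G(n,i)]$ is nondecreasing in $i$, so you may first replace $i$ by any larger $i'=o\bigl(n^{2-1/m(F)}\bigr)$ with $i'\to\infty$ (possible whenever $m(F)>1/2$, the case $m(F)=1/2$ being trivial) --- but it is worth stating explicitly.
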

In a related model it took nearly thirty years to solve the small subgraphs problem. 
Random $d$-regular graphs $G_{n,d}$ have been studied since around 1980, cf.~\cite{Wormald1999}, but only in 2007 Kim, Sudakov and Vu~\cite{KimSudakovVu2007} established the analogue of Theorem~\ref{thm:small:subgraphs} for $G_{n,d}$;  again $m(F)$ turns out to be the important quantity.

In this paper we consider a natural variant of the classical random graph process, which has recently attracted a lot of attention. 
Given some fixed \mbox{graph $H$}, this process also starts with an empty graph and then add edges one by one, but each new edge is now chosen uniformly at random subject to the condition that no copy of $H$ is formed. 
This so-called \emph{$H$-free process} was suggested by Bollob{\'a}s and Erd{\H o}s~\cite{Bollobas2010PC} at a conference in 1990, and it was first described in print in 1995 by Erd{\H o}s, Suen and Winkler~\cite{ErdoesSuenWinkler1995}, who asked how many edges the final graph typically has. 
In 2001 Osthus and Taraz~\cite{OsthusTaraz2001} answered this basic question up to logarithmic factors for the class of \emph{strictly $2$-balanced} graphs, i.e.\ where $H$ satisfies $v_H,e_H \geq 3$ and for all proper subgraphs $K$ of $H$ with $v_K \geq 3$ vertices we have 
\[ \frac{e_K-1}{v_K-2} < \frac{e_H-1}{v_H-2} =:  d_2(H) \enspace . \] 
Many interesting graphs are strictly $2$-balanced, including cycles $C_{\ell}$, complete graphs $K_s$, complete $r$-partite graphs $K_{t,\ldots,t}$ and the $d$-dimensional cube.
Only in a breakthrough in 2009, Bohman~\cite{Bohman2009K3} was able to close the logarithmic gap mentioned above for the $C_3$-free process, giving (up to constants) matching upper and lower bounds for the final number of edges. 
Subsequently several additional special cases have been settled, see e.g.~\cite{Picollelli2010C4, Picollelli2010K4Minus, Warnke2010K4, Wolfovitz2010K4}. 
Very recently, the final number of edges in the $C_{\ell}$-free process was resolved in~\cite{Warnke2010Cl}, giving the first matching bounds for a non-trivial class of graphs. 
As one can see, much research has been devoted to understanding the combinatorial structure of the final graph of the $H$-free process, but so far even very basic properties  are not well understood.

The main focus of the present work is the small subgraphs problem in the final graph of the $H$-free process, where $H$ is strictly $2$-balanced. 
An intriguing consequence of the recent analysis of Bohman and Keevash~\cite{BohmanKeevash2010H} is 
that although the graph $G(i)$ produced by the $H$-free process  after adding $i$ edges contains no copies of $H$, during some initial phase the number of small subgraphs in $G(i)$ and the unconstrained $G(n,i)$ are roughly the same 
(for the $C_3$-free process similar results were obtained by Wolfovitz~\cite{Wolfovitz2009K3Cycles,Wolfovitz2009K3Subgraph}). 
Regarding subgraph containment, their results imply the following statement. 
Recall that $m(F)$ denotes the maximum density of a graph $F$. 
\begin{theorem}%
\label{thm:small:subgraphs:H-free}%
{\normalfont\cite{BohmanKeevash2010H}} 
Let $H$ be a strictly $2$-balanced graph. Suppose that $F$ is a fixed non-empty $H$-free graph. Then there exists $\xi = \xi(F,H)$ such that for $m := \xi n^{2-1/d_2(H)}(\log n)^{1/(e_H-1)}$ we have 
\[
\lim_{n \to \infty} \PP[F \subseteq G(m)] = \begin{cases}
0 & \text{ if $m(F)> d_2(H)$}\\
1 & \text{ if $m(F) \leq d_2(H)$} 
\end{cases} \enspace . 
\]
\end{theorem}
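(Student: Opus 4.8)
\emph{Sketch of the argument.} The plan is to read off both halves of the statement from the comparison with the binomial random graph that underlies the work of Bohman and Keevash~\cite{BohmanKeevash2010H}. Write $p := m/\binom{n}{2} = \Theta\big(n^{-1/d_2(H)}(\log n)^{1/(e_H-1)}\big)$. The single piece of arithmetic behind everything is that, for a fixed non-empty graph $J$,
\[
n^{v_J}\,p^{e_J} \;=\; \Theta\big(n^{\,v_J - e_J/d_2(H)}\,(\log n)^{e_J/(e_H-1)}\big),
\]
whose power of $n$ is positive, zero or negative according as $e_J/v_J$ is smaller than, equal to, or larger than $d_2(H)$; when it is zero, i.e.\ when $J$ has density exactly $d_2(H)$, the quantity still tends to infinity, because the exponent of $\log n$ is then positive. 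Alongside this I would use one elementary fact about the process: by~\cite{BohmanKeevash2010H} there are whp $\Omega(n^2)$ \emph{open} pairs at every step up to time $m$, so at each step the edge just added coincides with any prescribed pair with conditional probability $O(1/n^2)$; a union bound over the at most $k!$ orders in which a prescribed set of $k$ pairs could have appeared then shows that all $k$ of them lie in $G(m)$ with probability $O\big(p^{\,k}\big)$.

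\emph{The $0$-statement.} Assume $m(F)>d_2(H)$ and fix $J^\ast\subseteq F$ with $e_{J^\ast}/v_{J^\ast}=m(F)$. By the last remark a fixed copy of $J^\ast$ lies in $G(m)$ with probability $O\big(p^{e_{J^\ast}}\big)$, so
\[
\mathbb{E}\big[\#\{\text{copies of }J^\ast\text{ in }G(m)\}\big] \;=\; O\big(n^{v_{J^\ast}}p^{e_{J^\ast}}\big) \;=\; n^{-\Omega(1)},
\]
since $e_{J^\ast}/v_{J^\ast}>d_2(H)$. As every copy of $F$ contains a copy of $J^\ast$, Markov's inequality gives $\PP[F\subseteq G(m)]=o(1)$; here $\xi$ need only be a fixed constant.

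\emph{The $1$-statement.} Assume $m(F)\le d_2(H)$, and choose $\xi$ small enough that, by the main result of~\cite{BohmanKeevash2010H}, the $H$-free process whp does not stop before $m$ edges have been added, so that $G(m)$ is well defined. Since $e_J/v_J\le m(F)\le d_2(H)$ for every non-empty $J\subseteq F$, the quantity $n^{v_J}p^{e_J}$ tends to infinity for each such $J$, and in particular for $F$ itself. I would then invoke the Bohman--Keevash tracking of subgraph (extension) counts, which shows that whp $G(m)$ contains $(1+o(1))$ times a deterministic quantity of order $\Theta\big(n^{v_F}p^{e_F}\big)\to\infty$ copies of $F$, hence at least one. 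An alternative using fewer of their estimates is a second moment computation for the number $X$ of copies of $F$: the contribution to $\mathbb{E}[X^2]$ of pairs of copies overlapping in a fixed non-empty $K$ is at most $\mathbb{E}[X]^2\cdot O\big(1/(n^{v_K}p^{e_K})\big)=o\big(\mathbb{E}[X]^2\big)$ by the joint-probability bound above together with $n^{v_K}p^{e_K}\to\infty$ (the borderline subgraphs of density exactly $d_2(H)$ being absorbed by the positive power of $\log n$), so that it only remains to check that the vertex-disjoint pairs contribute $(1+o(1))\mathbb{E}[X]^2$, which again rests on the near-independence of well-separated copies established in~\cite{BohmanKeevash2010H}, together with a lower bound of the right order on the first moment.

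\emph{Main obstacle.} The real difficulty is not the arithmetic but the fact that $m$ sits at the very top of the range Bohman and Keevash can analyse and carries the factor $(\log n)^{1/(e_H-1)}$, so the estimates being imported — survival of the process, the lower bound on the subgraph count, the count of open pairs, the near-independence of separated copies — are weakest precisely at the time they are needed; this is what pins $\xi$ down to a small constant. The borderline case $m(F)=d_2(H)$, where several of the relevant counts are only polylogarithmic in $n$, is where this tension is sharpest and where one must check most carefully that the inherited error terms do not swamp the main term.
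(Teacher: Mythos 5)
The paper itself does not prove this statement --- it is imported verbatim from \cite{BohmanKeevash2010H} with the remark that it ``follows from their results'' --- so there is no in-paper proof to compare against; your sketch is a plausible reconstruction of how the statement is extracted from the Bohman--Keevash estimates. Your central observation is right and is the heart of the matter: the factor $(\log n)^{1/(e_H-1)}$ in $m$ is exactly what makes $n^{v_J}p^{e_J}\to\infty$ even when $e_J/v_J=d_2(H)$, which is why the $1$-statement holds with ``$\leq$'' rather than ``$<$''. The $1$-statement is indeed best obtained by quoting their tracking of subgraph/extension counts directly; the alternative second-moment route you mention would anyway have to borrow their near-independence estimates, so it buys nothing.

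Two concrete repairs are needed in the $0$-statement. First, the claim that there are $\Omega(n^2)$ open pairs at every step up to $m$ is false at the top of the range: $|O(i)|=\Theta\bigl(q(t)n^2\bigr)$ (cf.~\eqref{eq:open-estimate}), and at $i=m$ one has $t=\mu(\log n)^{1/(e_H-1)}$, hence $q(t)=n^{-2e_H\mathrm{aut}(H)^{-1}(2\mu)^{e_H-1}}$, a genuine negative power of $n$. The per-step hitting probability is therefore only $O\bigl(n^{-2+\epsilon}\bigr)$ for a small constant $\epsilon=\epsilon(\mu)>0$, and your bound for $k$ prescribed pairs degrades to $O\bigl(p^{k}n^{\epsilon k}\bigr)$. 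The conclusion survives because $m(F)>d_2(H)$ is strict, so $n^{v_{J^\ast}}p^{e_{J^\ast}}=n^{-\delta+o(1)}$ with $\delta=\delta(F)>0$, and one may choose $\xi$ (hence $\epsilon$) small in terms of $F$, which is permitted since $\xi=\xi(F,H)$; but this dependence must be stated, since with a fixed $\xi$ and $F$ ranging freely the bound $O(p^k n^{\epsilon k})$ is not $O(p^k)$. Second, you cannot simply condition on the whp event that the open-pair estimates hold throughout and then treat $e_{i+1}$ as uniform on $O(i)$: that event depends on the entire trajectory up to step $m$, and conditioning on it changes the law of the process. This is precisely the pitfall the paper flags in the Gerke--Makai argument; the fix is the step-by-step intersection with events $\cH_i$ measurable in the first $i$ steps and a telescoping product of conditional probabilities, as in the proof of Theorem~\ref{main-theorem}. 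With these two repairs your argument for the $0$-statement goes through.
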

Unfortunately, the results in~\cite{BohmanKeevash2010H} only hold during the first $m$ steps of the $H$-free process, which motivates further investigation of the evolution in later steps. 
Since precise structural properties of the final graph are not known so far, we do not ask whether an analogue of Theorem~\ref{thm:small:subgraphs} also holds in the later evolution, but restrict our attention to the basic question whether fixed $H$-free graphs $F$ satisfying, say, $m(F) \gg d_2(H)$ appear or not. 
For the special case $H=C_3$ this has recently been addressed by Gerke and Makai~\cite{GerkeMakai2010K3}: they proved that for some $c>0$, whp fixed graphs $F$ with $e_F/v_F \geq c$ do not appear in the $C_3$-free process. 
We would like to remark that such a behaviour is not necessarily true for all constrained graph processes. 
For example,  Gerke  Schlatter, Steger and Taraz~\cite{GerkeSchlatterStegerTaraz2008} showed that in the random planar graph process (where random edges are added subject to the condition of maintaining planarity) \emph{every} fixed planar graph appears whp. 
This raises the question what behaviour the general $H$-free process exhibits.

\subsection{Main result}%
In this paper we prove that for strictly $2$-balanced $H$, the $H$-free process contains whp no copies of sufficiently dense graphs, even if their sizes grow moderately in $n$. 
In fact, we obtain a new result for the \emph{final} graph of the $H$-free process regarding the number of edges in every small subset of the vertices.  
As usual, given $A \subseteq [n]$, we write $e(A)$ for the number of edges joining vertices in $A$. 
\begin{theorem}%
\label{main-theorem}%
For every strictly $2$-balanced graph $H$ there exist $c,d> 0$ such that whp in the final graph of the $H$-free process we have $e(A) < c |A|$ for all $A \subseteq [n]$ with $1 \leq |A| \leq n^{d}$. 
\end{theorem}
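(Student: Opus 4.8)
The plan is to isolate a quasirandomness event $\cG$, depending only on the $H$-free process and holding with probability $1-o(1)$, on which the conclusion is forced; verifying $\PP[\cG]=1-o(1)$ then combines the analysis of Bohman and Keevash~\cite{BohmanKeevash2010H} with two first-moment bounds. Write $G(i)$ for the graph after $i$ steps, $\mathcal{O}(i)$ for the set of \emph{open} pairs at that stage (those whose addition keeps the graph $H$-free), $Q(i)=|\mathcal{O}(i)|$, and $T$ for the (random) number of steps, so the final graph is $G(T)$; write $e_{G(i)}(U)$ for the number of edges of $G(i)$ inside $U$, and $e(U):=e_{G(T)}(U)$. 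Fix $m:=\theta\,n^{2-1/d_2(H)}(\log n)^{1/(e_H-1)}$ with $\theta=\theta(H)>0$ a small constant chosen below. The one structural fact I would use is that $\mathcal{O}(i)$ is \emph{decreasing} in $i$: any pair added after step $m$ already lay in $\mathcal{O}(m)$, so on the event $T\ge m$,
\[
e(U)\;\le\;e_{G(m)}(U)\;+\;\bigl|\mathcal{O}(m)\cap\tbinom{U}{2}\bigr|\qquad\text{for every }U\subseteq[n].
\]

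Let $\cG_0$ be the Bohman--Keevash quasirandomness event for the first $m$ steps. It holds whp, and I would extract from it: $T\ge m$; that $Q(i)\ge Q(m)\ge n^{2-\epsilon_1}$ for all $i\le m$, where $n^{-\epsilon_1}$ is (up to a constant) the probability that a fixed pair is open at step $m$ and $\epsilon_1=\epsilon_1(\theta)\to0$ as $\theta\to0$, so that $\epsilon_1<1/d_2(H)$ once $\theta$ is small; and control of open/closed degrees, co-degrees and extension counts in $G(m)$. Put $\cG:=\cG_0\cap\{e_{G(m)}(U)\le C_1|U|\ \forall\,|U|\le n^{d}\}\cap\{|\mathcal{O}(m)\cap\tbinom{U}{2}|\le C_2|U|\ \forall\,|U|\le n^{d}\}$, with constants $C_1,C_2,d$ depending only on $H$ and fixed below. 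On $\cG$ the displayed inequality gives $e(U)\le(C_1+C_2)|U|$ for all $|U|\le n^{d}$, so the theorem holds with $c:=C_1+C_2+1$; it remains to bound by $o(1)$ the probability that either of the last two events fails while $\cG_0$ holds.

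For the first of these I would use a union bound over $U$ and over $\lceil C_1u\rceil$-subsets of $\binom{U}{2}$:
\[
\PP\bigl[\exists\,U:|U|=u\le n^{d},\ e_{G(m)}(U)\ge C_1u,\ \cG_0\bigr]\;\le\;\sum_{u\le n^{d}}\binom{n}{u}\binom{\binom{u}{2}}{\lceil C_1u\rceil}\,P_{\lceil C_1u\rceil},
\]
where $P_\ell:=\max_{|E'|=\ell}\PP[E'\subseteq G(m),\ \cG_0]$. Fixing $E'$ with $|E'|=\ell$ and summing over the order in which its pairs enter $G(m)$ and the steps $s_1<\dots<s_\ell\le m$ at which they do, iterated conditioning (the edge added at step $s$ is uniform in $\mathcal{O}(s-1)$, and $Q(s-1)\ge n^{2-\epsilon_1}$ on $\cG_0$) yields $P_\ell\le m^{\ell}n^{(-2+\epsilon_1)\ell}=\bigl(\theta\,n^{-1/d_2(H)+\epsilon_1}(\log n)^{1/(e_H-1)}\bigr)^{\ell}\le(n^{-\gamma'})^{\ell}$ for a fixed $\gamma'=\gamma'(H)>0$ since $\epsilon_1<1/d_2(H)$. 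The sum then becomes $\sum_{u\le n^{d}}(n\,u^{2C_1}n^{-\gamma'C_1})^{u}$ (up to a polynomial factor), which is $o(1)$ once $d<\gamma'/4$ and $C_1>2/\gamma'$.

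The crux is the analogous statement for the open graph, which by the same union bound reduces to a uniform tail estimate
\[
\PP[E'\subseteq\mathcal{O}(m)]\;\le\;(n^{-\rho})^{\ell}
\]
for some fixed $\rho=\rho(H)>0$ and every set $E'$ of $\ell$ pairs spanning at most $n^{d}$ vertices (given $\rho$ one then takes $C_2>2/\rho$ and shrinks $d$). Heuristically $\mathcal{O}(m)$ behaves like a random graph of density $n^{-\epsilon_1}$, for which this holds comfortably. When $E'$ forms a sparse graph, conditioning on part of it being open keeps the relevant neighbourhoods sparse, so the remaining pairs are open with probability $n^{-\epsilon_1+o(1)}$ each by the degree/co-degree control in $\cG_0$, and one may take $\rho$ close to $\epsilon_1$. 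When $E'$ forms a dense graph (or contains a dense piece), one instead uses that $E'\subseteq\mathcal{O}(m)$ forces $G(m)\cup E'$ to be $H$-free with all of $E'$ absent from $G(m)$; for a dense pattern this is very restrictive, and the pseudorandomness of $G(m)$ should render it exponentially unlikely in $\ell$ --- a quantitative, pattern-uniform version of the mechanism behind Theorem~\ref{thm:small:subgraphs:H-free}. Turning this heuristic into a clean bound on $\PP[E'\subseteq\mathcal{O}(m)]$, uniform over all $\ell$-edge patterns on at most $n^{d}$ vertices, out of the Bohman--Keevash machinery, is the step I expect to be the main obstacle; the remaining arguments are routine, with $C_1,C_2$ taken large and $d$ taken small in terms of $d_2(H)$, $e_H$ and $\rho$.
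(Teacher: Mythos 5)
Your high-level plan — work only with the first $m$ steps of the process, set up a quasirandomness event, and union-bound over all small vertex sets and all placements of $\Theta(|U|)$ pairs inside them — is the right one, and your treatment of the $e_{G(m)}(U)$ term is sound (it is essentially the derivation of the Bohman--Keevash density estimate, which the paper simply quotes). But the proposal has a genuine gap exactly where you flag it, and it is not a gap that can be closed "routinely": the pattern-uniform tail estimate $\PP[E'\subseteq \mathcal{O}(m)]\le n^{-\rho\ell}$ for every set $E'$ of $\ell$ pairs on at most $n^{d}$ vertices is a statement about the \emph{joint} distribution of openness events at a fixed time, and these are conjunctions of absence events ("no copy of $H$ minus an edge sits on $uv$") which are a priori positively correlated and which share closing structures when the pairs of $E'$ are crowded onto few vertices. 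Nothing of this pattern-uniform joint form is available in \cite{BohmanKeevash2010H}, and the sparse/dense case split you sketch is a heuristic, not an argument. Since this is the entire content of the theorem (the $e_{G(m)}(U)$ term alone says nothing about the final graph), the proof as written is incomplete.

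The paper closes precisely this gap by replacing your static question with a dynamic one. Instead of asking for the probability that all of $F$ is simultaneously open at time $m$, it tracks, for $m/2\le i\le m$, the event $\cE_{F,i}$ that no pair of $F$ has yet become \emph{closed}, and shows that conditionally on the history the next random edge $e_{i+1}$ lands in the set $O_F(i)=\bigcup_{uv\in F\cap O(i)}C_{uv}(i)$ with probability at least $13a\log n/m$. This follows from inclusion--exclusion together with the Bohman--Keevash lower bound on $|C_{uv}(i)|$, the upper bound on $|C_{uv}(i)\cap C_{u'v'}(i)|$, and the density estimate guaranteeing $|F\cap O(i)|\ge|F|/2$; multiplying over $m/2$ steps gives $\PP[\cE_{F,m}\wedge\cH_m]\le n^{-6a}$ with no need to control correlations among openness events. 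Note that $\cE_{F,m}$ is the event $F\subseteq O(m)\cup E(m)$, so this single bound dominates both of your tail estimates at once. If you want to rescue your write-up, the step-by-step conditional argument for $O_F(i)$ is the missing lemma; with it in hand your decomposition becomes redundant, since "some pair of $F$ is closed by step $m$" already certifies that $F$ never appears in the final graph.
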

This estimate is in contrast to most known results, which only hold during some initial number of steps. 
We immediately deduce the following statement regarding the small subgraphs problem in the final graph of the $H$-free process, which complements the results in~\cite{BohmanKeevash2010H} (see e.g.\ Theorem~\ref{thm:small:subgraphs:H-free}). 
\begin{corollary}%
\label{main-corollary}%
For every strictly $2$-balanced graph $H$ there exist $c,d> 0$ such that whp the final graph of the $H$-free process contains no copy of any graph $F$ with $1 \leq v_F \leq n^{d}$ vertices and $m(F) \geq c$. \qed 
\end{corollary}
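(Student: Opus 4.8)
The plan is to prove the estimate $e(A) < c|A|$ for all small $A$ via a first-moment (union bound) argument, controlling the probability that the $H$-free process ever produces a dense set. The key point is that the $H$-free process, although not a product measure, is amenable to an \emph{upper bound} on edge probabilities: at each step the process chooses an edge uniformly among the currently available (non-$H$-closing) pairs, and because $H$ is strictly $2$-balanced one expects the final graph to have only $O\!\big(n^{2-1/d_2(H)}(\log n)^{1/(e_H-1)}\big)$ edges whp, so the process runs for at most roughly that many steps. First I would fix a constant $c$ large (depending only on $H$) and a constant $d>0$ small, and it suffices to bound
\[
\PP\big[\exists A \subseteq [n]:\ 1 \le |A| \le n^{d}\ \text{and}\ e(A) \ge c|A|\big]
\le \sum_{a=1}^{n^{d}} \binom{n}{a}\, \PP\big[\text{some fixed $a$-set spans $\ge ca$ edges}\big].
\]
For a fixed $a$-set $A$, spanning $\ge ca$ edges means that at $\ge ca$ of the process steps the chosen edge fell inside $A$. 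Conditioning on the history, at a step where the current graph has $m$ edges and $Q(m)$ available pairs, the probability the new edge lands in a prescribed pair is $1/Q(m)$; so I would feed in the (known, from the analysis underlying Theorems~\ref{thm:small:subgraphs:H-free} and the cited works) lower bound $Q(m) = \Omega\!\big(n^2 \cdot (\text{something})\big)$ valid throughout the process, equivalently a uniform upper bound $p = p(n)$ on the per-step, per-pair probability. Then the number of chosen edges inside $A$ is stochastically dominated by a binomial with $\binom{a}{2}$ "targets" over the $\le M := n^{2-1/d_2(H)+o(1)}$ steps, giving
\[
\PP[e(A) \ge ca] \le \binom{\binom{a}{2} M}{ca} p^{ca} \le \big(a^2 M p / c\big)^{ca} \cdot (\text{const})^{ca}.
\]

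The heart of the estimate is then arithmetic: combining the $\binom{n}{a} \le n^a$ factor with the above and summing over $a \le n^d$. Writing everything as a power of $a$, one needs $n^{a} \cdot \big(a^2 M p\big)^{ca} \to 0$; since $a \le n^d$ and $Mp \le n^{-1/d_2(H)+o(1)}$ (this is the crucial gain: $p$ is much smaller than $1/M$ because the number of available pairs stays near $n^2$ while only $M \ll n^2$ edges are ever added, wait — more precisely $p \approx 1/n^2$ per pair per step isn't enough, so one uses instead that the \emph{total} number of edges ever inside $A$ concentrates), one gets an exponent of the form $a\big(1 + c(2d - 1/d_2(H) + o(1))\big)\log n$, which is negative once $d < \tfrac{1}{2d_2(H)}$ and $c$ is chosen large enough to dominate the additive constants and the $\log a$ terms. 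I would first handle the range $a = O(1)$ (where Theorem~\ref{thm:small:subgraphs:H-free}'s non-appearance statement for $m(F) > d_2(H)$ essentially suffices, though only up to step $m$, so one still needs the tail), then the range $\omega(1) \le a \le n^d$ by the displayed computation, being careful that the bound on the number of steps and on $Q(m)$ holds whp for the \emph{whole} process and intersecting with that good event.

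The main obstacle is obtaining a per-step probability bound that is valid \emph{throughout} the process, not just during the first $m \approx n^{2-1/d_2(H)}(\log n)^{1/(e_H-1)}$ steps — this is exactly the regime where the sharp results of Bohman--Keevash and others do not directly apply. I expect to resolve this by showing that it is enough to have a crude but global upper bound: the process certainly terminates by the time the graph becomes $H$-saturated, and one can bound the number of available pairs from below (equivalently the edge-count from above) by elementary means or by invoking the weaker, globally-valid bounds; the dense-set event is so unlikely (superexponentially small after the union bound) that we can afford to lose logarithmic or even small polynomial factors in $p$ and $M$, provided $d$ is taken correspondingly small. A secondary technical point is that the edge choices are not independent across steps, so the stochastic-domination step must be justified by revealing the process step by step and bounding conditional probabilities, e.g.\ via a standard Azuma/martingale or a direct product-over-history argument; this is routine once the uniform bound on $Q(m)$ is in hand. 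Finally, Corollary~\ref{main-corollary} follows immediately: a graph $F$ with $v_F \le n^d$ and $m(F) \ge c$ contains (on its densest subgraph, or on all of $F$) a vertex set $A$ with $e(A) \ge m(F)\,|A| \ge c|A|$, contradicting Theorem~\ref{main-theorem}.
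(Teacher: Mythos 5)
Your closing deduction of the corollary from Theorem~\ref{main-theorem} is correct and is exactly the paper's (one-line) argument: a densest subgraph $J \subseteq F$ gives a vertex set $A$ with $|A| = v_J \le v_F \le n^{d}$ and $e(A) \ge e_J \ge c|A|$. The problem is the route you take to the underlying estimate $e(A) < c|A|$. Your plan hinges on a per-step, per-pair probability bound $1/Q(i)$ with $Q(i) = |O(i)|$ bounded below \emph{throughout} the process. No such bound exists: $|O(i)|$ decreases to $0$ as the process terminates, and for $i$ beyond the first $m \approx n^{2-1/d_2(H)}(\log n)^{1/(e_H-1)}$ steps none of the Bohman--Keevash estimates apply. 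In the late stages the remaining open pairs could in principle concentrate inside a small set $A$, making the conditional probability that the next edge lands in $\binom{A}{2}$ close to $1$; the ``crude but global'' lower bound on the number of available pairs that you propose to invoke is precisely what is unavailable, and the number of steps after step $m$ can still be of order $n^{2-1/d_2(H)+o(1)} \gg c|A|$, so the tail cannot be dismissed as contributing few edges either. The losses here are not polylogarithmic, so the slack in your union bound does not absorb them.

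The paper avoids this with an idea your proposal is missing: a pair that becomes \emph{closed} (i.e.\ whose addition would create a copy of $H$) can never become an edge at any later step. Hence it suffices to show that whp, already by step $m$, \emph{every} candidate edge set $F \subseteq \binom{A}{2}$ with $|F| = \lceil c|A| \rceil$ and $|A| \le n^{d}$ contains at least one closed pair. This is carried out entirely inside the window $i \le m$ where Theorem~\ref{thm:BohmanKeevash2010H} applies: one lower-bounds the number of open pairs whose selection as $e_{i+1}$ would close some pair of $F$, using the bounds on $|C_{uv}(i)|$ and $|C_{uv}(i) \cap C_{u'v'}(i)|$, and multiplies the resulting per-step failure probabilities over $m/2 \le i \le m$ to get $n^{-6a}$ per choice of $(A,F)$, which survives the union bound. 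The conclusion then transfers to the final graph for free. Without some such ``witness by step $m$'' mechanism, your first-moment computation cannot be completed.
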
%
Up to constants the bound $m(F) \geq c$ is best possible, since the results of Bohman and Keevash~\cite{BohmanKeevash2010H} imply that $H$-free graphs $F$ with $v_F = O(1)$ vertices and $m(F) \leq d_2(H)$ do appear in the $H$-free process (cf.\ Theorem~\ref{thm:small:subgraphs:H-free}). 
For the special case $H=C_3$, Gerke and Makai~\cite{GerkeMakai2010K3} have previously obtained a similar result for fixed graphs $F$. 
So, Corollary~\ref{main-corollary} not only generalizes the main result of~\cite{GerkeMakai2010K3}, but moreover demonstrates that whp dense graphs $F$ never appear in the $H$-free process, also if their number of vertices grow moderately \mbox{in $n$}. 
In fact, we believe that fixed graphs with maximum density strictly larger than $d_2(H)$ do not appear in the $H$-free process. 
\begin{conjecture}%
Let $H$ be a strictly $2$-balanced graph and suppose that $F$ is a fixed non-empty graph satisfying $m(F) > d_2(H)$. 
Then whp the final graph of the $H$-free process contains no copy of $F$. 
\end{conjecture}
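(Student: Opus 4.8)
The plan is to prove the equivalent statement that whp there is no set $A\subseteq[n]$ with $1\le|A|\le n^{d}$ and $e(A)\ge c|A|$, by a union bound in which the probability that a \emph{fixed} small set induces many edges is estimated through the structure of the process at an intermediate time. \emph{Reduction.} Fix $a$ with $1\le a\le n^{d}$ and put $k:=\lceil ca\rceil$. If some $A$ with $|A|=a$ has $e(A)\ge ca$, then the final graph $G(\infty)$ contains a graph $F_{0}$ with $V(F_{0})=A$ and exactly $k$ edges, so
\[
\PP\big[\exists\,A\colon|A|=a,\ e(A)\ge ca\big]\le\binom{n}{a}\binom{\binom a2}{k}\max_{F_{0}}\PP\big[F_{0}\subseteq G(\infty)\big],
\]
the maximum ranging over graphs $F_{0}$ on $a$ vertices with $k$ edges. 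Using $\binom na\le n^{a}$ and $\binom{\binom a2}{k}\le a^{2k}$, the theorem reduces to the following claim, which I establish below on a whp event $\mathcal{G}$: there is a constant $\beta=\beta(H)>0$ with $\PP[F_{0}\subseteq G(\infty),\mathcal{G}]\le n^{-\beta k}$ for every such $F_{0}$. Indeed, the $a$-th summand is then at most $n^{a}(a^{2}n^{-\beta})^{k}\le n^{a}n^{(2d-\beta)k}$, which for $d<\beta/4$ and $c$ large enough is at most $n^{-a}$; summing over $1\le a\le n^{d}$ and adding $\PP[\neg\mathcal{G}]=o(1)$ gives $o(1)$.

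\emph{Splitting the process.} Fix a small constant $\epsilon=\epsilon(H)\in(0,1/d_{2}(H))$ and let $m$ be the first step at which the number of open pairs drops to $n^{2-\epsilon}$; this is well before the end of the process, since at the end there remain only $n^{2-1/d_{2}(H)+o(1)}$ addable pairs. By the analysis of Bohman and Keevash~\cite{BohmanKeevash2010H}, whp the event $\mathcal{G}$ holds, where $\mathcal{G}$ asserts that (i) throughout the first $m$ steps there are at least $n^{2-\epsilon}$ open pairs, with $m=n^{2-1/d_{2}(H)+o(1)}$, and (ii) $G(m)$ is close enough to a binomial random graph of the same density that any fixed pair $uv$ lies in no copy of $H$ using otherwise only edges of $G(m)$ with probability at most $n^{-\epsilon+o(1)}$. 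Now suppose $F_{0}\subseteq G(\infty)$, and split $E(F_{0})=P_{1}\sqcup P_{2}$ with $P_{1}=E(F_{0})\cap E(G(m))$. Since openness of a pair is monotone decreasing along the process until the pair is added, every edge of $P_{2}$ — being added after step $m$ — is open in $G(m)$, whence
\[
\PP[F_{0}\subseteq G(\infty),\mathcal{G}]\le\sum_{P_{1}\sqcup P_{2}=E(F_{0})}\PP\big[P_{1}\subseteq G(m)\ \text{and every pair of }P_{2}\text{ is open in }G(m),\ \mathcal{G}\big].
\]

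\emph{Bounding each term.} For the first event we reveal the process step by step and bound, at each of the $|P_{1}|$ steps at which an edge of $P_{1}$ is added, the conditional probability of choosing exactly that pair by $1/n^{2-\epsilon}$, using (i); this yields $(m/n^{2-\epsilon})^{|P_{1}|}=n^{(\epsilon-1/d_{2}(H))|P_{1}|+o(|P_{1}|)}$. For the second event we combine (ii) with a second-moment (Janson-type) estimate inside $G(m)$: the copies of $H$ obstructing the openness of distinct pairs of $P_{2}$ are, apart from lower-order contributions, edge-disjoint — they may differ freely off the $n^{d}$-vertex set $V(F_{0})$ — and strict $2$-balancedness of $H$ controls the overlap terms, so this probability is at most $n^{-\epsilon|P_{2}|+o(|P_{2}|)}$. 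Each of the at most $2^{k}$ summands is thus $n^{-\beta'k+o(k)}$ with $\beta':=\min\{1/d_{2}(H)-\epsilon,\epsilon\}>0$, and absorbing $2^{k}n^{o(k)}$ into a slightly smaller constant $\beta$ gives the claim.

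\emph{Main obstacle.} The genuinely hard part is (ii), and more precisely upgrading the one-pair estimate to the joint estimate over all $|P_{2}|$ pairs of a small dense set with only a negligible loss. All these pairs live inside $V(F_{0})$, which has merely $n^{d}$ vertices, so the copies of $H$ obstructing their openness overlap in a controlled but non-trivial way and are mildly positively correlated; bounding these correlations — uniformly over all small vertex sets — is exactly where the detailed pseudorandomness of the $H$-free process established by Bohman and Keevash~\cite{BohmanKeevash2010H} (counts of copies of $H$, of $H$ minus an edge, codegrees, and so on) is really needed, along with a small enough choice of $d$ and careful bookkeeping of the $n^{o(1)}$ error terms. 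Checking that the intermediate time $m$ lies safely inside the window where this tracking is valid also requires some care. For $H=C_{3}$ this machinery collapses to the transparent fact that a pair is open iff its endpoints have no common neighbour, which is the situation handled by Gerke and Makai~\cite{GerkeMakai2010K3}.
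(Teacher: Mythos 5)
The statement you were asked to prove is the paper's \emph{Conjecture}, which the paper explicitly leaves open; what you have sketched is instead (a variant of) the paper's Theorem~\ref{main-theorem}. Your very first sentence claims that the conjecture is ``equivalent'' to the statement that whp no set $A$ with $|A|\le n^{d}$ satisfies $e(A)\ge c|A|$, but this is false: that density statement only excludes graphs $F$ with $m(F)\ge c$ for whatever constant $c$ your argument produces, whereas the conjecture demands exclusion of every fixed $F$ with $m(F)>d_2(H)$, a threshold that can be arbitrarily close to $d_2(H)$. Concretely, for $H=C_3$ one has $d_2(H)=2$, so the conjecture asserts that a fixed graph of maximum density $2.01$ never appears; your union bound over all $\binom{\binom{a}{2}}{k}$ placements of $k=\lceil ca\rceil$ edges forces $c$ to exceed roughly $2/\beta$ with $\beta\le\epsilon<1/e_H$, i.e.\ $c\ge 2e_H$, which is far above $d_2(H)$ for every admissible $H$. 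An entropy-versus-probability count of this kind inherently cannot reach the conjectured threshold: for a fixed $F$ one must beat a union bound of size only $n^{v_F}$, which requires a per-edge cost of $n^{-1/d_2(H)+o(1)}$, and neither your bound $\PP[P_1\subseteq G(m)]\le (m/n^{2-\epsilon})^{|P_1|}$ (which loses $n^{\epsilon}$ per edge) nor your openness cost of $n^{-\epsilon}$ per pair of $P_2$ comes close to that.

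Even judged as a proof of the weaker Theorem~\ref{main-theorem}, the argument has a gap that you yourself flag: the joint estimate that all pairs of $P_2$ are simultaneously open in $G(m)$ with probability $n^{-\epsilon|P_2|+o(|P_2|)}$ is asserted via an unexecuted ``Janson-type'' correlation argument, and the single-pair estimate (ii) is a probabilistic statement about one pair that does not obviously tensorize when all pairs live inside a common $n^{d}$-vertex set. The paper's actual route for Theorem~\ref{main-theorem} avoids this entirely: it conditions on no pair of $F$ being closed up to step $i$, shows deterministically (on the tracked event $\cH_i$, via the inclusion--exclusion bound on $|O_F(i)|=\bigl|\bigcup_{uv\in F\cap O(i)}C_{uv}(i)\bigr|$ using \eqref{eq:closed-estimate} and \eqref{eq:closed-intersection-estimate}) that the next edge closes some pair of $F$ with probability at least $13a\log n/m$, and multiplies over the steps $m/2\le i\le m$. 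If you want to attack the conjecture itself, you need an idea that exploits the structure of the specific fixed graph $F$ and controls the process beyond step $m$, where the Bohman--Keevash tracking is no longer available; no such idea appears in your sketch.
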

We now outline our strategy for proving Theorem~\ref{main-theorem}.  
Intuitively, we show that whp for \emph{every} possible placement of $\lceil c|A| \rceil$ edges inside some set $A  \subseteq [n]$ satisfying $|A| \leq n^d$, already after the first $m$ steps there exists a `witness' which certifies that not all of these edges can appear in the $H$-free process.    
The same basic idea was used in \cite{GerkeMakai2010K3}, but the main part of their argument is tailored towards the (simpler) $C_3$-free case (in fact, a similar idea has also previously been used for bounding the independence number of the $H$-free process in \cite{Bohman2009K3,BohmanKeevash2010H}). 
By contrast, our argument is for the more general $H$-free process, where $H$ is strictly $2$-balanced, and one important ingredient are the estimates obtained by Bohman and Keevash~\cite{BohmanKeevash2010H}. 
For the sake of simplicity and clarity of presentation, we have made no attempt to optimize the constants obtained in our proof, and we also omit floor and ceiling signs whenever these are not crucial.

\section{Preliminaries} 
\label{sec:preliminaries}
In this section we introduce our notation and review properties of the $H$-free process. 
We closely follow \cite{BohmanKeevash2010H}, and the reader familiar with these results may wish to skip this section.

\subsection{Constants, functions and parameters}  
\label{sec:preliminaries:constants:functions:parameters}
In the remainder of this paper we consider a fixed strictly $2$-balanced graph $H$. 
We first choose $\epsilon$ and then $\mu$ small enough such that, in addition to the implicit constraints in \cite{BohmanKeevash2010H} for $H$, we have 
\begin{equation}
\label{eq:def:eps:mu}
\epsilon < \min\left\{\frac{1}{e_H},\frac{1}{2d_2(H)}\right\} \qquad \text{ and } \qquad 2e_{H}(2\mu)^{e_{H}-1} \leq \epsilon \enspace .
\end{equation}
So $\epsilon$ and $\mu$ are absolute constants (depending only on $H$), since the additional constraints in \cite{BohmanKeevash2010H} only depend on $H$. 
Writing $\mathrm{aut}(H)$ for the number of automorphisms of $H$, following \cite{BohmanKeevash2010H} we define 
\begin{equation}
\label{eq:parameters:functions}
p := n^{-1/d_2(H)} \enspace , \quad  m := \mu n^2 p (\log n)^{1/(e_H-1)} \quad \text{ and } \quad q(t) := e^{-2e_{H}{\mathrm{aut}(H)}^{-1}(2t)^{e_H-1}}  \enspace .
\end{equation} 
For every $i \leq m$ we set $t = t(i) := i/(n^2p)$, but will just write $t$ if there is no danger of confusion.

\subsection{Terminology and notation}  
\label{sec:preliminaries:terminology:notation}
Let $G(i)$ denote the graph with vertex set $[n]=\{1,\ldots, n\}$ after $i$ steps of the $H$-free process. 
Its edge set $E(i)$ contains $i$ edges and we partition the remaining non-edges $\binom{[n]}{2} \setminus E(i)$ into two sets $O(i)$ and $C(i)$ which we call \emph{open} and \emph{closed} pairs, respectively. 
We say that a pair $uv$ of vertices is \emph{closed} in $G(i)$ if $G(i) \cup \{uv\}$ contains a copy of $H$. 
Observe that the $H$-free process always chooses the next edge $e_{i+1}$ uniformly at random from $O(i)$. 
In addition, for $uv \in O(i)$ we write $C_{uv}(i)$ for the set of pairs $xy \in O(i)$ such that adding $uv$ and $xy$ to $G(i)$ creates a copy of $H$ containing both $uv$ and $xy$. 
Note that $uv \in O(i)$ would become closed, i.e.\ belong to $C(i+1)$, if $e_{i+1} \in C_{uv}(i)$. 
We remark that in contrast to \cite{BohmanKeevash2010H} we work only with sets of unordered pairs.

\subsection{Previous results for the $H$-free process} 
\label{sec:preliminaries:previous:results:H-free}
Using Wormald's differential equation method~\cite{Wormald1995DEM,Wormald1999DEM}, Bohman and Keevash~\cite{BohmanKeevash2010H} track a wide range of variables throughout the \mbox{first $m$ steps} of the $H$-free process, where $H$ is strictly $2$-balanced. 
From this they deduce their remarkable lower bound on the final number of edges. 
For our argument the key properties are estimates for the number of open and closed pairs and certain density statements. 
The following theorem conveniently summarizes these in a (highly) simplified form. 
\begin{theorem}%
\label{thm:BohmanKeevash2010H}%
{\normalfont\cite{BohmanKeevash2010H}}
Suppose $H$ is strictly $2$-balanced. Set $\beta_H:=e_H(e_H-1)/\mathrm{aut}(H)$.  
Let $\cH_j$ denote the event that for every $n^{2}p \leq i \leq j$, in $G(i)$ we have 
\begin{align}
\label{eq:open-estimate}
|O(i)| & \leq q(t)n^2 \enspace ,\\
\label{eq:closed-estimate}
|C_{uv}(i)| &\geq \beta_H \cdot (2t)^{e_H-2}q(t) p^{-1} && \text{for all pairs $uv \in O(i)$} \enspace , \\ 
\label{eq:closed-intersection-estimate}
|C_{uv}(i)  \cap C_{u'v'}(i)| &\leq  n^{-1/e_H} p^{-1} && \text{for all distinct $uv,u'v' \in O(i)$ and}\\
\label{eq:edges-estimate}
e(A) &\leq \max\big\{8\epsilon^{-1}|A|,p{|A|}^2n^{2\epsilon}\big\} && \text{for all sets $A \subseteq [n]$} \enspace .
\end{align}
Then $\cH_m$ holds whp in the $H$-free process. \qed 
\end{theorem}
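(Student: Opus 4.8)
The goal is to show that the event $\cH_m$ holds whp, and I would recognise Theorem~\ref{thm:BohmanKeevash2010H} as a simplified repackaging of the trajectory estimates that Bohman and Keevash establish via Wormald's differential equation method. The central random variables are the open-pair count $Q(i):=|O(i)|$, the closing degrees $D_{uv}(i):=|C_{uv}(i)|$ for each open pair $uv$, and the codegrees $D_{uv,u'v'}(i):=|C_{uv}(i)\cap C_{u'v'}(i)|$. Because the $H$-free process selects $e_{i+1}$ uniformly from $O(i)$, one-step conditional expectations are explicitly computable: for a fixed open pair $uv$ one has $\PP[uv\in C(i+1)\mid G(i)]=D_{uv}(i)/Q(i)$, so the expected decrease of $Q$ at step $i+1$ is $1+Q(i)^{-1}\sum_{uv}D_{uv}(i)$. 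Rescaling time by $t=i/(n^2p)$, so that one unit of $t$ corresponds to $n^2p$ steps, these one-step expectations become autonomous differential equations whose solutions are the idealised trajectories.

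Working through the heuristics, if all closing degrees concentrate around a common value then the normalised count $\hat q(t)\approx Q(i)/n^2$ satisfies an equation of the form $\hat q'(t)=-\beta_H(2t)^{e_H-2}\hat q(t)$; integrating reproduces the $(2t)^{e_H-1}$ dependence and, after the correct combinatorial bookkeeping, the constant $2e_H\,\mathrm{aut}(H)^{-1}$ recorded in \eqref{eq:parameters:functions}, so that $\hat q(t)$ is exactly the function $q(t)$. This yields the open-pair bound \eqref{eq:open-estimate}. In parallel, a fixed open pair $uv$ lies in roughly $(2t)^{e_H-2}p^{-1}$ partial copies of $H$ through the current edge set, each surviving with probability $\approx q(t)$, and the combinatorial factor $\beta_H=e_H(e_H-1)/\mathrm{aut}(H)$ counts the two distinguished edges of $H$ up to automorphism; this gives the closing-degree trajectory $\beta_H(2t)^{e_H-2}q(t)p^{-1}$ of \eqref{eq:closed-estimate}. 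The codegrees are lower order by strict $2$-balancedness, yielding \eqref{eq:closed-intersection-estimate}.

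To upgrade these heuristics to whp statements, I would track, for each variable, its deviation from the trajectory and show that a suitably renormalised deviation is a supermartingale (for the upper bounds) or a submartingale (for the lower bounds) up to negligible drift, then apply a Freedman-type martingale inequality. The estimates are \emph{self-correcting}: an excursion of a variable above its trajectory produces a one-step expected change that pushes it back, which keeps the error terms small enough to survive the union bound over all open pairs in \eqref{eq:closed-estimate} and over all pairs of open pairs in \eqref{eq:closed-intersection-estimate}. Here one must also bound the maximum one-step change of each variable, to control the quadratic-variation term in Freedman's inequality, and then intersect all the good events so that the bounds hold simultaneously for every $n^2p\le i\le m$; a stopping-time formulation, in which the first failure of any single estimate is the stopping time, handles the coupling of this large system.

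The density estimate \eqref{eq:edges-estimate} is of a different flavour and I would prove it by a union bound rather than by tracking a trajectory. The key input is a bound on the probability that a fixed pair $xy$ is ever selected: since $e_{i+1}$ is uniform on $O(i)$, this is at most $\sum_{i<m}|O(i)|^{-1}$, and feeding in the matching lower bound $|O(i)|\gtrsim q(t)n^2$ from the same analysis and passing to the integral $p\int_0^{t_m}q(t)^{-1}\,dt$, with $t_m=m/(n^2p)$, shows it is at most $pn^{O(\epsilon)}$; it is precisely the constraint $2e_H(2\mu)^{e_H-1}\le\epsilon$ in \eqref{eq:def:eps:mu} that forces $q(t_m)^{-1}\le n^{\epsilon}$ and controls this integral. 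For larger sets, concentration of $e(A)$ about its mean $\lesssim p|A|^2n^{O(\epsilon)}$ together with the union bound over all size-$|A|$ sets gives the bound $p|A|^2n^{2\epsilon}$, while for small sets a direct first-moment count $\le n^{a}\binom{\binom{a}{2}}{8\epsilon^{-1}a}(pn^{O(\epsilon)})^{8\epsilon^{-1}a}$ suffices, the choice $\epsilon<1/(2d_2(H))$ being exactly what makes the per-edge cost $pn^{O(\epsilon)}=n^{-1/d_2(H)+O(\epsilon)}$ small enough for the linear coefficient $8\epsilon^{-1}$ to beat the entropy factor $n^{a}$. The main obstacle throughout is the self-correcting concentration for the coupled system \eqref{eq:open-estimate}--\eqref{eq:closed-intersection-estimate}: keeping every error below its trajectory by a margin large enough both to close the supermartingale drift and to absorb the union bound, while simultaneously controlling rare large one-step jumps, is the technically demanding core of the Bohman--Keevash analysis.
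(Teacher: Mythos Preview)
The paper does not prove this theorem at all: it is quoted from \cite{BohmanKeevash2010H} and stated with a \qed. The only justification the paper gives is the short paragraph after the statement, which points to the specific places in \cite{BohmanKeevash2010H} from which each of \eqref{eq:open-estimate}--\eqref{eq:edges-estimate} can be read off (Theorem~1.4 and Lemma~4.2 for \eqref{eq:open-estimate} and \eqref{eq:edges-estimate}; Corollary~6.2 and Lemma~8.4 for \eqref{eq:closed-estimate} and \eqref{eq:closed-intersection-estimate}), together with the remark that the failure probabilities there are $n^{-\omega(1)}$ so a union bound over all steps and pairs is harmless.

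Your proposal, by contrast, sketches the actual machinery behind the Bohman--Keevash analysis: the differential-equation heuristics for $q(t)$ and the closing degrees, the self-correcting martingale scheme with Freedman-type concentration, the stopping-time coupling of the tracked variables, and the separate union-bound argument for the density estimate. As an outline of how \cite{BohmanKeevash2010H} establishes these bounds your description is accurate in spirit, and the role you assign to the constraints in \eqref{eq:def:eps:mu} is correct. But for the purposes of \emph{this} paper the theorem is a black box, and the ``proof'' you should be comparing against is simply the citation. So your write-up is not wrong, just misplaced in scope: it reconstructs the content of the reference rather than matching what the paper actually does, which is to invoke that reference.
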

Both \eqref{eq:open-estimate} and \eqref{eq:edges-estimate} follow readily from Theorem~$1.4$ and Lemma~$4.2$ in \cite{BohmanKeevash2010H}. 
Corollary~$6.2$ and Lemma~$8.4$  in \cite{BohmanKeevash2010H} give \eqref{eq:closed-estimate} and \eqref{eq:closed-intersection-estimate} by elementary considerations (the `high probability events' in \cite{BohmanKeevash2010H} fail with probability at most $n^{-\omega(1)}$, so there is no problem in taking a union bound over all steps and pairs).

It should be noted that Theorem~\ref{thm:BohmanKeevash2010H} does not directly imply our main result. 
The important difference here is that \eqref{eq:edges-estimate} is only valid during the first $m$ steps, whereas Theorem~\ref{main-theorem} holds in the final graph of the $H$-free process. 
In fact, the proof used in \cite{BohmanKeevash2010H} breaks down when $m$ is too large, and this explains why a different approach is needed to obtain results that also hold in later steps.

\section{The proof}%
\label{sec:proof}%
Recall that when a pair becomes closed it has not yet been added to the graph produced by the $H$-free process, and furthermore can never be added in future steps, as this would create a copy \mbox{of $H$}. 
So, to prove that a certain set of edges $F$ does not appear in the $H$-free process, it suffices to show that already after the first $m$ steps, at least one of its edges is closed. 
\begin{proof}[Proof of Theorem~\ref{main-theorem}]%
For the sake of concreteness we prove the theorem with 
\begin{equation}
\label{eq:def:c:d}
c := \max\left\{\frac{16}{\epsilon},\frac{13 \cdot 2^5}{\beta_H \mu^{e_H-1}}\right\} \qquad \text{ and } \qquad d := \min\left\{\frac{1}{c},\frac{1}{e_H}-\epsilon,\frac{1}{d_2(H)}-2\epsilon,1\right\} \enspace , 
\end{equation}
where $\epsilon$ and $\mu$ are chosen as in Section~\ref{sec:preliminaries:constants:functions:parameters} and $\beta_H$ is defined as in Theorem~\ref{thm:BohmanKeevash2010H}. 
Given $i \leq m$ and $F \subseteq \binom{[n]}{2}$, by $\cE_{F,i}$ we denote the event that $F \cap C(i) = \emptyset$. 
Let $\cE_m$ denote the event that there exists $A \subseteq [n]$ and $F \subseteq \binom{A}{2}$ with $1 \leq |A| \leq n^{d}$ and $|F| = \left\lceil c |A| \right\rceil$ for which $\cE_{F,m}$ holds. 
Note that if $\cE_m$ fails, then $e(A) < c |A|$ for all $A \subseteq [n]$ with $1 \leq |A| \leq n^{d}$, since, as discussed above, none of the corresponding edge sets $F$ can appear in the $H$-free process. 
So, because $\cH_m$ holds whp by Theorem~\ref{thm:BohmanKeevash2010H}, in order to complete the proof it suffices to show 
\begin{equation}
\label{eq:prob_closed_small}
\PP[\cE_m \wedge \cH_m] = o(1) \enspace .
\end{equation}

Fix $A \subseteq [n]$ and $F \subseteq \binom{A}{2}$ with $1 \leq |A|=a\leq n^{d}$ and $|F| = \left\lceil c a \right\rceil$. 
With foresight, let $O_{F}(i) \subseteq O(i)$ denote the open pairs which would close at least one pair of $F$ if chosen as the next edge $e_{i+1}$. 
Then 
\begin{equation}
\label{eq:prob_no_closed_edge_0}
\begin{split}
\PP[\cE_{F,m} \wedge \cH_m] &= \PP[\cE_{F,m/2} \wedge \cH_{m/2}] \prod_{m/2 \leq i \leq m-1} \PP[\cE_{F,i+1} \wedge \cH_{i+1} \mid \cE_{F,i} \wedge \cH_i]\\
&\leq\prod_{m/2 \leq i \leq m-1} \PP[e_{i+1} \notin O_{F}(i) \mid \cE_{F,i} \wedge \cH_i] \enspace .
\end{split}
\end{equation}
Note that $\cE_{F,i} \wedge \cH_i$ depends only on the first $i$ steps, so given this, the process fails to choose $e_{i+1}$ from $O_{F}(i)$ with probability $1-|O_{F}(i)|/|O(i)|$. 
With this in mind, we claim that in order to prove \eqref{eq:prob_closed_small} it suffices to show that for $m/2 \leq i \leq m$, whenever $\cE_{F,i} \wedge \cH_i$ holds we have
\begin{equation}
\label{eq:bound_closed_edge}
|O_{F}(i)| \geq \frac{13a \log n}{m} |O(i)| \enspace .
\end{equation}
Indeed, combining \eqref{eq:prob_no_closed_edge_0} and \eqref{eq:bound_closed_edge}, using the inequality $1-x \leq e^{-x}$ we deduce, say, 
\begin{equation}
\label{eq:prob_no_closed_edge}
\PP[\cE_{F,m} \wedge \cH_m] \leq e^{-6a \log n} =  n^{-6a} \enspace . 
\end{equation}
Now, taking a union bound over all choices of $A$ and $F$, we obtain
\begin{equation*}
\PP[\cE_m \wedge \cH_m] \leq \sum_{1 \leq a \leq n^{d}} \binom{n}{a} \binom{\binom{a}{2}}{\lceil c a \rceil} n^{-6a} \leq \sum_{1 \leq a \leq n^{d}} n^{a} a^{2(ca+1)} n^{-6a} \enspace . 
\end{equation*}  
Using $1 \leq a \leq n^{d}$ and \eqref{eq:def:c:d}, i.e.\ $d \leq \min\{1/c,1\}$,  we see that
\[
n^{a} a^{2(ca+1)} n^{-6a} \leq n^{2(ca+1)d} \cdot n^{-5a} \leq n^{2a+2} \cdot n^{-5a} \leq n^{-a} \enspace , 
\]
which readily implies $\PP[\cE_m \wedge \cH_m] = o(1)$. 
To sum up, assuming \eqref{eq:bound_closed_edge} we have established the desired formula \eqref{eq:prob_closed_small}.

In the remainder we prove \eqref{eq:bound_closed_edge} for $m/2 \leq i \leq m$, whenever $\cE_{F,i} \wedge \cH_i$ holds. 
Since $uv \in F \cap O(i)$ would belong to $C(i+1)$ iff $e_{i+1} \in C_{uv}(i)$, we deduce $O_{F}(i) = \bigcup_{uv \in F \cap O(i)} C_{uv}(i)$. 
Therefore 
\begin{equation}
\label{eq:bound_closed_edge_1}
|O_F(i)| \geq \sum_{uv \in F \cap O(i)} |C_{uv}(i)| \ \ - \sum_{\substack{uv,u'v' \in F \cap O(i)\\ uv \neq u'v'}} |C_{uv}(i) \cap C_{u'v'}(i)|
 \enspace .
\end{equation}
Observe that for $a \leq n^{d}$ we have $a \geq pa^2n^{2\epsilon}$ by definition of $p$ and $d$, cf.\ \eqref{eq:parameters:functions} and \eqref{eq:def:c:d}. 
Recall that \eqref{eq:edges-estimate} holds on $\cH_i$. 
So, using $|F| = \lceil ca \rceil$ and \eqref{eq:def:c:d}, i.e.\ $c \geq 16\epsilon^{-1}$, we deduce 
\begin{equation}
\label{eq:edges-estimate:A}
e(A) \leq \max\left\{8\epsilon^{-1}a, pa^2n^{2\epsilon} \right\} = 8\epsilon^{-1} a \leq c/2 \cdot a \leq |F|/2 \enspace \enspace . 
\end{equation} 
Recall that whenever $\cE_{F,i}$ holds, then we have $F \cap C(i) = \emptyset$, which in turn implies $F \cap O(i) = F \setminus E(i)$. 
So, using $F \subseteq \binom{A}{2}$ and \eqref{eq:edges-estimate:A}, we see that  
\begin{equation}
\label{eq:open:pairs:estimate:F}
|F \cap O(i) | = |F \setminus E(i)| \geq |F| - e(A) \geq |F|/2 \enspace .
\end{equation} 
Note that \eqref{eq:closed-estimate} and \eqref{eq:closed-intersection-estimate} hold on $\cH_i$. 
Substituting these estimates as well as \eqref{eq:open:pairs:estimate:F} into \eqref{eq:bound_closed_edge_1}, we have 
\begin{equation*}
\begin{split}
|O_F(i)| &\geq  |F|/2 \cdot \beta_H (2t)^{e_H-2}q(t)p^{-1} - |F|^2 \cdot n^{-1/e_H}p^{-1} \enspace . 
\end{split}
\end{equation*}
Observe that \eqref{eq:def:eps:mu} and \eqref{eq:parameters:functions} imply $q(t) \geq n^{-\epsilon}$ for $i \leq m$. 
So, since $|F| \leq 3cn^{d}$, by \eqref{eq:def:c:d} we see that $|F|n^{-1/e_H} \leq 3c n^{-\epsilon} \leq 3c q(t)$. 
Using that $t = i/(n^{2}p) = \omega(1)$ for $i \geq m/2$, we crudely obtain 
\begin{equation*}
|O_F(i)| \geq |F| \beta_H 2^{e_H-4} \cdot t^{e_H-2} q(t)p^{-1} \enspace . 
\end{equation*}
Note that on $\cH_i$ we furthermore have $q(t) \geq |O(i)|/n^2$ by \eqref{eq:open-estimate}. 
So, writing $t = i/(n^{2}p)$ and using $|F| \geq ca$ as well as \eqref{eq:def:c:d}, for $m/2 \leq i \leq m = \mu n^2 p (\log n)^{1/(e_H-1)} $ we deduce
\begin{equation*}
\begin{split}
|O_F(i)|  & \geq |F| \beta_H 2^{e_H-4}   \frac{i^{e_H-2}}{(n^{2}p)^{e_H-2}p} q(t) \geq |F| \beta_H 2^{e_H-4}  \frac{i^{e_H-1}}{i(n^{2}p)^{e_H-1}} |O(i)| \\
& \geq |F| \frac{\beta_H\mu^{e_H-1}}{2^{5}} \cdot \frac{\log n}{m} |O(i)| \geq  \frac{c\beta_H\mu^{e_H-1}}{2^{5}} \cdot  \frac{a\log n}{m} |O(i)| \geq \frac{13a\log n}{m} |O(i)|\enspace . 
\end{split}
\end{equation*}
To summarize, we have established \eqref{eq:bound_closed_edge} and, as explained, this completes the proof. 
\end{proof}
The main difficulty in the above prove is the estimate \eqref{eq:bound_closed_edge}. 
A similar bound is implicit in the approach of Gerke and Makai~\cite{GerkeMakai2010K3}, but their argument is tailored towards the (simpler) $C_3$-free case. 
By contrast, our proof exploits a combinatorial characterization of $O_{F}(i)$ for the more general $H$-free process, where $H$ is strictly $2$-balanced,  which in turn enables us to prove stronger results (e.g.\ we allow for $|A| \leq n^d$ instead of constant size). 
In fact, we can also obtain the asymptotic size of $|O_F(i)|$ using the results in \cite{BohmanKeevash2010H}; we leave these details to the interested reader. 
Furthermore, our approach avoids a subtle conditioning issue: in~\cite{GerkeMakai2010K3} the authors condition on events that depend on the first $m$ steps, but seem to assume that after the first $i$ steps, with $i < m$, the next edge $e_{i+1}$ is still chosen uniformly at random from $O(i)$.

\bigskip{\bf Acknowledgements.} 
I am grateful to my supervisor Oliver Riordan for carefully reading an earlier version of this paper and for helpful comments.
Furthermore, I would like to thank Angelika Steger for drawing my attention to the triangle-free process, and, in particular, to a preprint of \cite{Bohman2009K3}.

\small
\bibliographystyle{plain}

\begin{thebibliography}{10}

\bibitem{Bohman2009K3}
T.~Bohman.
\newblock The triangle-free process.
\newblock {\em Advances in Mathematics}, 221(5):1653--1677, 2009.

\bibitem{BohmanKeevash2010H}
T.~Bohman and P.~Keevash.
\newblock The early evolution of the {$H$}-free process.
\newblock {\em Inventiones Mathematicae}, 181:291--336, 2010.

\bibitem{Bollobas2010PC}
B.~Bollob\'as.
\newblock Personal communication.

\bibitem{bollobas81}
B.~Bollob{\'a}s.
\newblock Threshold functions for small subgraphs.
\newblock {\em Mathematical Proceedings of the Cambridge Philosophical
  Society}, 90(2):197--206, 1981.

\bibitem{ErdosRenyi1960}
P.~Erd\H{o}s and A.~R\'enyi.
\newblock On the evolution of random graphs.
\newblock {\em Publication of the Mathematical Institute of the Hungarian
  Academy of Science}, 5:17--61, 1960.

\bibitem{ErdoesSuenWinkler1995}
P.~Erd\H{o}s, S.~Suen, and P.~Winkler.
\newblock On the size of a random maximal graph.
\newblock {\em Random Structures \& Algorithms}, 6(2-3):309--318, 1995.

\bibitem{ErdosRenyi1959}
P.~Erd{\H{o}}s and A.~R{\'e}nyi.
\newblock On random graphs. {I}.
\newblock {\em Publicationes Mathematicae Debrecen}, 6:290--297, 1959.

\bibitem{GerkeMakai2010K3}
S.~Gerke and T.~Makai.
\newblock No dense subgraphs appear in the triangle-free graph process.
\newblock Preprint, 2010. \arxiv{1002.2316}.

\bibitem{GerkeSchlatterStegerTaraz2008}
S.~Gerke, D.~Schlatter, A.~Steger, and A.~Taraz.
\newblock The random planar graph process.
\newblock {\em Random Structures \& Algorithms}, 32(2):236--261, 2008.

\bibitem{KimSudakovVu2007}
J.H.~Kim, B.~Sudakov, and V.~Vu.
\newblock Small subgraphs of random regular graphs.
\newblock {\em Discrete Mathematics}, 307(15):1961--1967, 2007.

\bibitem{OsthusTaraz2001}
D.~Osthus and A.~Taraz.
\newblock Random maximal {$H$}-free graphs.
\newblock {\em Random Structures \& Algorithms}, 18(1):61--82, 2001.

\bibitem{Picollelli2010C4}
M.E.~Picollelli.
\newblock The final size of the {$C_4$}-free process.
\newblock Preprint, 2009. \arxiv{1010.5208}.

\bibitem{Picollelli2010K4Minus}
M.E.~Picollelli.
\newblock The diamond-free process.
\newblock Preprint, 2010. \arxiv{1010.5207}.

\bibitem{Warnke2010Cl}
L.~Warnke.
\newblock The {$C_{\ell}$}-free process.
\newblock Preprint, 2010.

\bibitem{Warnke2010K4}
L.~Warnke.
\newblock When does the {$K_4$}-free process stop?
\newblock Preprint, 2010. \arxiv{1007.3037}.

\bibitem{Wolfovitz2009K3Cycles}
G.~Wolfovitz.
\newblock 4-cycles at the triangle-free process.
\newblock {\em Electronic Notes in Discrete Mathematics}, 34:589 -- 592, 2009.
\newblock European Conference on Combinatorics, Graph Theory and Applications
  (EuroComb 2009).

\bibitem{Wolfovitz2009K3Subgraph}
G.~Wolfovitz.
\newblock Triangle-free subgraphs at the triangle-free process.
\newblock Preprint, 2009. \arxiv{0903.1756}.

\bibitem{Wolfovitz2010K4}
G.~Wolfovitz.
\newblock The {$K_4$}-free process.
\newblock Preprint, 2009. \arxiv{1008.4044}.

\bibitem{Wormald1995DEM}
N.C.~Wormald.
\newblock Differential equations for random processes and random graphs.
\newblock {\em The Annals of Applied Probability}, 5(4):1217--1235, 1995.

\bibitem{Wormald1999DEM}
N.C.~Wormald.
\newblock The differential equation method for random graph processes and
  greedy algorithms.
\newblock In {\em Lectures on approximation and randomized algorithms}, pages
  73--155. PWN, Warsaw, 1999.

\bibitem{Wormald1999}
N.C.~Wormald.
\newblock Models of random regular graphs.
\newblock In {\em Surveys in Combinatorics 1999, {\normalfont London
  Mathematical Society Lecture Notes Series 267}}, pages 239--298. Cambridge
  University Press, Cambridge, 1999.

\end{thebibliography}

\end{document}